\newtheorem{theo}{Theorem}[section]
\newtheorem{coro}[theo]{Corollary}
\newtheorem{lemma}[theo]{Lemma}
\newtheorem{conjec}[theo]{Conjecture}
\newtheorem{question}[theo]{Question}
\renewcommand{\qed}{$\blacksquare$}
\newcommand{\sC}{\mathcal{C}}
\newcommand{\sP}{\mathcal{P}}
\begin{document}

\title{A family of counterexamples for a conjecture of Berge on $\alpha$-diperfect digraphs}

\author[1]{Caroline Aparecida de Paula Silva \thanks{This author was financed by Coordenação de Aperfeiçoamento de Pessoal de Nível Superior - Brasil (CAPES) - Finance Code 001 and FAPESP Proc. 2020/06116-4. ORCID: 0000-0003-4661-2822.}}
\author[2]{Cândida Nunes da Silva \thanks{ORCID: 0000-0002-4649-0274.}}
\author[1]{Orlando Lee \thanks{This author was financed by CNPq
Proc. 303766/2018-2, CNPq Proc 425340/2016-3 and FAPESP Proc. 2015/11937-9.
ORCID: 0000-0003-4462-3325.}}
\affil[1]{\footnotesize{Institute of Computing, University of Campinas, Campinas, São Paulo, Brazil} \newline \{caroline.silva, lee\}@ic.unicamp.br}
\affil[2]{\footnotesize{Department of Computing, Federal University of São Carlos, Sorocaba, São Paulo, Brazil} candida@ufscar.br}

\date{}

\maketitle 
   
\begin{abstract}
Let $D$ be a digraph. A stable set $S$ of $D$ and a path partition $\sP$ of $D$ are \emph{orthogonal} if every path $P \in \sP$ contains exactly one vertex of $S$. In 1982, Berge defined the class of $\alpha$-diperfect digraphs. A digraph $D$ is \emph{$\alpha$-diperfect} if for every maximum stable set $S$ of $D$ there is a path partition $\sP$ of $D$ orthogonal to $S$ and this property holds for every induced subdigraph of $D$. 
An \emph{anti-directed odd cycle} is an orientation of an odd cycle $(x_0,\ldots,x_{2k},x_0)$ with $k\geq2$ in which each vertex $x_0,x_1,x_2,x_3,x_5,x_7,\ldots,x_{2k-1}$ is either a source or a sink. Berge conjectured that a digraph $D$ is $\alpha$-diperfect if and only if $D$ does not contain an anti-directed odd cycle as an induced subdigraph. In this paper, we show that this conjecture is false by exhibiting an infinite family of orientations of complements of odd cycles with at least seven vertices that are not $\alpha$-diperfect.
\end{abstract}

\section{Introduction}

Let $G=(V(G), E(G))$ be a graph. We use the concepts of path and cycle as defined in~\cite{bomu08}. We may think of a path or cycle as a subgraph of $G$. The \emph{length} of a path (respectively, cycle) is its number of edges. The \emph{order of a path} $P$, denoted by $|P|$, is defined as its number of vertices, that is, $|P| = |V(P)|$. Similarly, the \emph{order of a cycle} is its number of vertices. Let $C_k$ denote the graph isomorphic to a cycle of length $k\geq 3$ and let $\overline{G}$ denote the \emph{complement of $G$.}
We also use the concepts of stable set and clique as defined in~\cite{bomu08}. The cardinality of a maximum stable set (respectively, maximum clique) is denoted by $\alpha(G)$ (respectively, $\omega(G)$).
A \emph{(proper) coloring} $\sC = \{C_{1}, C_{2}, ..., C_{m}\}$ 
of a graph $G$ is a partition of $V(G)$ into stable sets. The cardinality of a minimum coloring is denoted by $\chi{(G)}$.

Let $D=(V(D), A(D))$ be a digraph. 
For every concept for graphs, we may have an analogue for digraphs. The \emph{underlying graph} of $D$, denoted by $U(D)$, is the simple graph with vertex set $V(D)$ such that $u$ and $v$ are adjacent in $U(D)$ if and only if $(u, v) \in A(D)$ or $(v, u) \in A(D)$. We borrow terminology from undirected graphs when dealing with a digraph $D$ by considering its underlying graph $U(D)$. For example, we say that a \emph{stable set} of a digraph $D$ is a stable set of its underlying graph $U(D)$.
Conversely, we may obtain a directed graph $D$ from a graph $G$ by replacing each edge $uv$ of $G$ by an arc $(u,v)$, or an arc $(v,u)$, or both; such directed graph $D$ is called a \emph{super-orientation} of $G$. 
A super-orientation which does not contain a \emph{digon} (a directed cycle of length two) is an \emph{orientation}. A digraph $D$ is \emph{symmetric} if $D$ is a super-orientation of a graph $G$ in which every edge $uv$ of $G$ is replaced by both arcs $(u,v)$ and $(v,u)$.

If $(u,v)$ is an arc of $D$, then we say that $u$ \emph{dominates} $v$ and $v$ is \emph{dominated} by $u$. If $v$ is not dominated by any of its neighbors, then we say that $v$ is a \emph{source}. Similarly, if $v$ does not dominate any of its neighbors in $D$, then we say that $v$ is a \emph{sink}.
A \emph{directed path} or \emph{directed cycle} is an orientation of a path or cycle, respectively, in which each vertex dominates its successor in the sequence. 
%A digraph $D$ is \emph{strong} if for every pair of vertices $u, v \in V(D)$, there is a directed path from $u$ to $v$ in $D$. 
Henceforth, when we say path of a digraph, we mean directed path (note that we do not use this con\-ven\-tion for cycles).  We denote by $\lambda(G)$ ($\lambda(D)$) the cardinality of a maximum path in a graph (digraph). When we say a cycle of a digraph, we mean either a super-orientation of an undirected cycle with length at least three or a digon. 

Let $X$ and $Y$ be two disjoint subsets of $V(D)$. We use the notation $X \mapsto Y$ to denote that every vertex of $X$ dominates every vertex of $Y$ in $D$ and no vertex of $Y$ dominates a vertex of $X$ in $D$. If $X=\{u\}$ (respectively, $Y=\{v\}$), we may denote $u \mapsto Y$ (respectively, $X \mapsto v$).
A \emph{path partition} of $D$ is a collection of vertex-disjoint paths of $D$ that cover $V(D)$. Let $\pi(D)$ denote the cardinality of a smallest path partition of $D$. 
In 1960, Gallai and Milgram~\cite{GallaiMilgram1960} showed that, for every digraph, the size of a minimum path partition $\pi(D)$ is less than or equal to the size of a maximum stable set $\alpha(D)$. Actually, Gallai and Milgram showed a stronger statement that implies Gallai-Milgram's Theorem. It uses the concept of \emph{orthogonality}, defined next. Let $\sP$ be
a path partition and let $S$ be a stable set of $D$. We say that $\sP$ and $S$ are
\emph{orthogonal} if $|S \cap P| = 1$ for every $P \in \sP$; we also say that $S$ is orthogonal
to $\sP$ or vice versa. 

% \begin{lemma}[Gallai-Milgram~\cite{GallaiMilgram1960}]
% \label{lem:gallai-milgram}
% Let $D$ be a digraph and let $\sP$ be a path partition of $D$. Then,
% \begin{enumerate}[{\rm(i)}]
% \item there is a path partition $\sQ$ of $D$ such that  $\ini(\sQ)\subset\ini(\sP)$, $\ter(\sQ)\subset\ter(\sP)$ and $|\sQ|=|\sP|-1$, or
% \item there is a stable set $S$ which is orthogonal to $\sP$.
% \end{enumerate}
% \end{lemma}

\begin{theo}[Gallai and Milgram~\cite{GallaiMilgram1960}]
  \label{th:gallai-milgram}
  Let $D$ be a digraph. For every minimum path partition $\sP$ of $D$, there is a stable set $S$ such that $\sP$ and $S$ are orthogonal. In particular, $\pi(D) \leq \alpha(D)$.
\end{theo}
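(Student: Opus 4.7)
The plan is to proceed by induction on $|V(D)|$. The base case $|V(D)| = 1$ is immediate: take $\sP = \{(v)\}$ and $S = \{v\}$.

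For the inductive step, fix an arbitrary minimum path partition $\sP = \{P_1, \ldots, P_k\}$ of $D$ with $k = \pi(D)$, and let $t_i$ denote the terminal of $P_i$. Set $T = \{t_1, \ldots, t_k\}$. If $T$ is already a stable set of $D$, then $S := T$ is orthogonal to $\sP$ and we are done.

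Suppose therefore that $T$ is not stable. Then some arc of $D$ lies inside $T$, and after reindexing we may assume $(t_1, t_2) \in A(D)$. I would first observe that $|V(P_2)| \geq 2$: otherwise $P_2$ is the singleton $(t_2)$, and appending $t_2$ to $P_1$ via $(t_1, t_2)$ produces a path partition of $D$ with $k-1$ paths, contradicting the minimality of $\sP$. Hence $P_2$ has a predecessor $t_2^-$ of $t_2$. Let $D' := D - t_2$ and $\sP' := (\sP \setminus \{P_2\}) \cup \{P_2 - t_2\}$; then $\sP'$ is a path partition of $D'$ with exactly $k$ paths, so $\pi(D') \leq k$.

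The crucial step is to establish $\pi(D') = k$, so that $\sP'$ is itself a minimum path partition of $D'$. Granting this, the induction hypothesis applied to $\sP'$ in $D'$ yields a stable set $S' \subseteq V(D')$ orthogonal to $\sP'$. Because each path of $\sP'$ is contained in the corresponding path of $\sP$ (with $P_2 - t_2 \subseteq P_2$), the same $S'$ is orthogonal to $\sP$ in $D$ and remains stable in $D$ since $D - t_2$ is an induced subdigraph, completing the inductive step with $S := S'$.

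The main obstacle is the lower bound $\pi(D') \geq k$. The intended argument is that any path partition $\sR$ of $D'$ with fewer than $k$ paths can be extended to a path partition of $D$ by reattaching $t_2$ at an end of some path of $\sR$—for instance, by appending $t_2$ to a path whose terminal dominates $t_2$ in $D$ (such as the path containing $t_1$ when $t_1$ is its terminal)—yielding a path partition of $D$ with fewer than $k$ paths, contradicting $\pi(D) = k$. The delicate subcase is when no such reattachment is possible and $|\sR| = k - 1$; this is handled by imposing a secondary extremality on the initial choice of $\sP$, for example by choosing $\sP$ among all minimum path partitions of $D$ so as to lexicographically maximize the sorted multiset of path lengths. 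Any witness $\sR$ to $\pi(D') < k$ then yields a competing minimum path partition of $D$ that strictly improves this secondary parameter, contradicting the choice of $\sP$ and finishing the proof.
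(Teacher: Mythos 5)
The paper does not prove this theorem---it is quoted from Gallai--Milgram---so your attempt has to be measured against the classical argument. Your overall plan (induct on $|V(D)|$, look at the terminal vertices, and if two of them are joined by an arc $(t_1,t_2)$ delete $t_2$ and recurse) is the right skeleton, but the step you yourself flag as crucial, namely $\pi(D-t_2)=k$, is simply false, and your proposed repair does not save it. Concretely, take $V(D)=\{1,2,3,4,5\}$ with arcs $(1,2)$, $(2,4)$, $(3,4)$, $(3,5)$, $(4,1)$, $(4,5)$. Here $5$ is a sink whose only in-neighbours are $3$ and $4$, and one checks that $D$ has no Hamiltonian path, so $\pi(D)=2$; yet $D-5$ has the Hamiltonian path $(3,4,1,2)$, so $\pi(D-5)=1$. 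Now $\sP=\{(3),(1,2,4,5)\}$ is a minimum path partition with terminals $t_1=3$, $t_2=5$ and arc $(t_1,t_2)$, and $|P_2|\geq 2$, so your induction deletes $t_2=5$ and needs $\pi(D-5)=2$, which fails. The vertex $5$ cannot be reattached to $(3,4,1,2)$ at either end or by insertion, so your ``reattachment'' contradiction is unavailable; and your secondary extremality does not help either, because the competing partition $\{(3,4,1,2),(5)\}$ has the same sorted length multiset $(4,1)$ as $\sP$, so nothing strictly improves. There is also a quantifier problem: the statement asserts the conclusion for \emph{every} minimum path partition, so you are not free to impose an extremal choice on $\sP$ at the outset.

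The classical proof avoids all of this by inducting on a different, weaker invariant: instead of ``$\sP$ has minimum cardinality,'' one assumes ``no path partition of $D$ has terminal-vertex set properly contained in $\ter(\sP)$.'' Every minimum path partition satisfies this (a properly smaller terminal set would mean fewer paths), so the theorem as stated follows; but unlike minimum cardinality, this inclusion-minimality \emph{is} preserved when you delete $t_2$ and shorten $P_2$. The verification is a short three-way case analysis: if some path partition $\sQ$ of $D-t_2$ had $\ter(\sQ)\subsetneq\ter(\sP')$, you either append $t_2$ after $t_2^-$, or append it after $t_1$, or add it back as a singleton, and in each case you produce a path partition of $D$ whose terminal set is properly contained in $\ter(\sP)$, contradicting the hypothesis on $\sP$. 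In my example above, $\{3,4\}$ is indeed an inclusion-minimal terminal set for $D-5$ even though $\{(3),(1,2,4)\}$ is not a minimum path partition of $D-5$, which is exactly the slack your version of the induction lacks. You should restructure the proof around this invariant.
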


A straightforward application of Gallai-Milgram's Theorem is the following corollary.

\begin{coro}
\label{coro:join-paths}
Let $D$ be a digraph. Let $P$ be a path of $D$ and let $v \in V(D) - V(P)$. If $v$ is adjacent to every vertex of $P$, then $D$ has a path $P'$ such that $V(P') = V(P) \cup \{v\}$.
\hfill\qed
\end{coro}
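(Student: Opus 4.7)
The plan is to insert $v$ into the sequence defining $P$ at an appropriate position, producing the desired extended path $P'$ directly by a short case analysis. Even though the statement is flagged as an application of Gallai--Milgram, I find a direct constructive argument to be cleaner and more transparent than trying to bound $\pi$ via $\alpha$ on $D[V(P) \cup \{v\}]$ (which fails in general, since the underlying graph of a directed path has large independence number).

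Write $P = p_1 p_2 \cdots p_k$. By hypothesis, for each index $i \in \{1, \ldots, k\}$ at least one of the arcs $(p_i,v)$, $(v,p_i)$ belongs to $A(D)$. First I would dispose of the trivial endpoint cases: if $(v, p_1) \in A(D)$, prepend $v$ to obtain $v p_1 \cdots p_k$; if $(p_k, v) \in A(D)$, append $v$ to obtain $p_1 \cdots p_k v$. Either choice immediately gives a directed path on $V(P) \cup \{v\}$, and this also handles the degenerate possibility $k = 1$.

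If neither easy case applies, then the adjacency hypothesis forces $(p_1, v) \in A(D)$ and $(v, p_k) \in A(D)$. I would then let $i$ be the largest index in $\{1, \ldots, k-1\}$ with $(p_i, v) \in A(D)$; such an $i$ exists because $i = 1$ already qualifies, and $i \leq k-1$ because $(p_k,v) \notin A(D)$ in this case. By maximality of $i$ the arc $(p_{i+1}, v)$ does not exist, so adjacency of $v$ and $p_{i+1}$ forces $(v, p_{i+1}) \in A(D)$. Concatenating gives the directed path $p_1 \cdots p_i \, v \, p_{i+1} \cdots p_k$, whose vertex set is exactly $V(P) \cup \{v\}$, as required.

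No step in this argument is difficult; the entire content is the observation that along $P$ one can find a \emph{transition index} $i$ at which the arc between $v$ and $P$ switches orientation from ``into $v$'' to ``out of $v$''. The only thing to watch is the bookkeeping at the endpoints, which is handled cleanly by the easy cases above.
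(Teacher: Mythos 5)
Your proof is correct and complete: the two endpoint cases plus the transition-index argument produce a directed path on $V(P) \cup \{v\}$ in every situation, and the bookkeeping at $i = k-1$ is handled properly because you have already ruled out $(p_k, v) \in A(D)$ in that case. The paper itself writes down no proof --- it presents the corollary as an immediate consequence of Gallai--Milgram's Theorem --- so your explicit insertion argument is a genuinely different, self-contained route; it is essentially the path-merging step that appears inside standard proofs of Gallai--Milgram itself. One correction, though: your parenthetical claim that the Gallai--Milgram route ``fails in general'' applies only to the weak form $\pi(H) \leq \alpha(H)$, which indeed gives nothing useful on $H = D[V(P) \cup \{v\}]$. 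The paper states the strong, orthogonality form (Theorem~\ref{th:gallai-milgram}), and that version does yield the corollary in two lines: if $H$ had no Hamiltonian path, then $\{P, (v)\}$ would be a \emph{minimum} path partition of $H$, so some stable set $S$ would be orthogonal to it; such an $S$ would have to be $\{v, p_i\}$ for some $i$, contradicting the hypothesis that $v$ is adjacent to every vertex of $P$. So both routes work: yours is more elementary and constructive, while the intended one is shorter given the theorem as already stated in the paper.
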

% \begin{proof}
% Let $D' = D[V(P_1)\cup V(P_2)]$ and let $\sP = \{P_1, P_2\}$ be a path partition of $D'$. Since every vertex of $P_1$ is adjacent to every vertex of $P_2$, there is no stable set orthogonal to $\sP$. By Lemma~\ref{lem:gallai-milgram}, $D'$ has a path partition $\sP'$ such that $|\sP'| = |\sP| - 1 = 1$, $ini(\sP')\subset ini(\sP)$ and $ter(\sP')\subset ter(\sP)$. Hence, the path $P$ of $\sP'$ satisfies properties (i)-(iii). 
% \hfill\qed
% \end{proof}

% In 1934, Rédei~\cite{Redei1934} proved the following classical result.
% \begin{theo}[Rédei~\cite{Redei1934}]
%     \label{theo:redei}
%     Every super-orientation of a complete graph has a hamiltonian path.
% \end{theo}
% %and 

A graph $G$ is \emph{perfect} if $\chi(H) = \omega(H)$ for every induced subgraph $H$ of $G$. It is easy to show that if $G$ is perfect, then $G$ cannot contain either an odd cycle of order at least five or its complement as an induced subgraph. 
Berge~\cite{berge1961farbung} conjectured that the converse was true as well.
In 2006, Chudnovsky, Robertson, Seymour and Thomas~\cite{chudnovsky2006strong} proved this long standing open conjecture and it became known as the Strong Perfect Graph Theorem:
\begin{theo}[Chudnovsky \textit{et al}~\cite{chudnovsky2006strong}]
\label{theo:strong-perfect}
A graph $G$ is perfect, if and only if, $G$ does not contain an odd cycle with five or more vertices or its complement as an induced subgraph.
\end{theo}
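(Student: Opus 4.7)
The plan is to follow the strategy of Chudnovsky, Robertson, Seymour and Thomas, since a direct proof via elementary coloring arguments is known to fail even on small cases. The forward direction is easy: an odd hole $C_{2k+1}$ with $k\geq 2$ has $\chi=3>2=\omega$, and its complement has $\chi=k+1>k=\omega$, and perfection is closed under taking induced subgraphs, so neither can appear inside a perfect graph. Thus the entire burden is the converse, and I would attack it by contradiction: assume there is a \emph{Berge} graph (one with no odd hole and no odd antihole as induced subgraphs) which is imperfect, and pick $G$ to be a vertex-minimal such counterexample. The goal is to derive a contradiction from the structural rigidity forced on $G$.

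The central tool I would develop is a \emph{decomposition theorem} for Berge graphs: every Berge graph either (i) belongs to one of a few explicitly described \emph{basic} classes known to be perfect, or (ii) admits one of a short list of structural decompositions. For the basic classes I would take bipartite graphs, line graphs of bipartite graphs, their complements, and double split graphs; each is shown to be perfect by direct argument (König's theorem handles the first two). For the decompositions I would use the $2$-join, its complement, the balanced skew partition, and the homogeneous pair. Then I would prove, decomposition by decomposition, that if $G$ has such a cutset and every \emph{proper} induced subgraph of $G$ is perfect, then $G$ itself is perfect; combined with minimality, this closes the contradiction once the decomposition theorem is in hand.

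Proving the decomposition theorem is where essentially all the work lives and is the main obstacle. Standard approaches proceed by case analysis on certain configurations that must appear in any Berge graph not in the basic classes: one looks for a \emph{stretcher}, or a long \emph{prism}, or a certain \emph{wheel} configuration, and in each case derives either a balanced skew partition or a $2$-join. A substantial auxiliary result is that no minimum imperfect Berge graph admits a balanced skew partition, which must be proved separately and is delicate because one has to rule out the decomposition rather than exploit it. I would also need to handle carefully the appearance of even pairs and of certain ``trigraph'' generalizations that simplify the induction. I do not expect to produce new or shorter arguments at any of these steps; realistically my proposal is to reproduce the multi-hundred-page argument of \cite{chudnovsky2006strong}, with the decomposition theorem and the skew-partition lemma as the two pieces I expect to consume almost all of the effort.
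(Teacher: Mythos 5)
The paper does not prove this statement; it is imported as a citation of the Strong Perfect Graph Theorem of Chudnovsky, Robertson, Seymour and Thomas, and is used here only as a black box. Your forward direction is correct and complete, and your outline of the converse (minimal counterexample, decomposition theorem for Berge graphs into basic classes plus $2$-joins, balanced skew partitions and homogeneous pairs, together with the lemma that a minimum imperfect Berge graph admits no such decomposition) accurately describes the strategy of the cited proof rather than offering an alternative, so there is nothing to compare beyond noting that you would be reproducing the reference the paper already relies on.
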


Motivated by Gallai-Milgram's Theorem and looking for stronger properties in the relationship between stable sets and paths in digraphs, Berge~\cite{Berge1982b} introduced in 1982
a new class of digraphs which he called $\alpha$-diperfect digraphs. A digraph $D$ is
$\alpha$\emph{-diperfect} if every induced subdigraph $H$ of $D$ has the
following property: for every  maximum stable set $S$ of $H$, there exists a
path partition $\sP$ of $H$ such that $\sP$ and $S$ are orthogonal. Berge~\cite{Berge1982b} proved that every symmetric digraph as well as every digraph whose underlying graph is perfect is $\alpha$-diperfect. However, he also showed that 
there are super-orientations of odd cycles that are not $\alpha$-diperfect.
We say that a super-orientation $D$ of an odd cycle is an \emph{anti-directed odd cycle} if $U(D) =(y_0,\ldots,y_{2k},y_0)$ with $k\geq2$ and each of $y_0,y_1,y_2,y_3,y_5,y_7,\ldots,y_{2k-1}$ is either a source or a sink in $D$. Figure~\ref{fig:anti-directed-odd-cycles} shows examples of anti directed odd cycles.

\begin{figure}[htb]
    \centering
    \subfloat[The set $S=\{y_0, y_3\}$ is a maximum stable set but there is no path partition orthogonal to $S$.]{\includegraphics{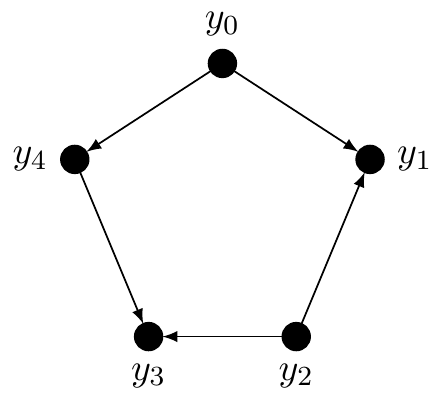}}
    \qquad
    \qquad
    \subfloat[The set $S=\{y_0, y_3, y_5\}$ is a maximum stable set but there is no path partition orthogonal to $S$.]{\includegraphics{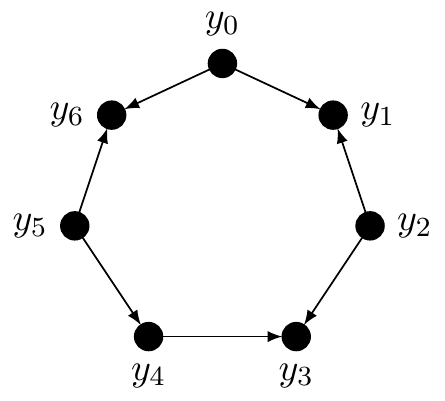}}
    \caption{Examples of anti-directed odd cycles.}
    \label{fig:anti-directed-odd-cycles}
\end{figure}

Berge~\cite{Berge1982b} proved the following characterization of super-orientations of odd cycles with at least five vertices that are $\alpha$-diperfect.

\begin{theo}[Berge~\cite{Berge1982b}]\label{th:anti-directed}
Let $D$ be a super-orientation of a $C_{2k+1}$, with $k\geq2$. Then, $D$ is $\alpha$-diperfect if and only if $D$ is not an anti-directed odd cycle.
\end{theo}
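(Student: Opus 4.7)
The plan is to prove both directions of the equivalence.

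For the ``only if'' direction, assuming $D$ is an anti-directed odd cycle on cyclic vertex sequence $(y_0,\ldots,y_{2k})$, I would exhibit a maximum stable set admitting no orthogonal path partition. The natural candidate is $S=\{y_0,y_3,y_5,\ldots,y_{2k-1}\}$, which has size $k=\alpha(C_{2k+1})$. Supposing an orthogonal path partition $\sP$ exists, each $P_i\in\sP$ is a consecutive arc of the underlying cycle containing exactly one $s_i\in S$. Because $s_i$ is a source or a sink, it cannot lie in the interior of any directed path and must therefore be an endpoint of $P_i$. A simple counting shows that $\sP$ corresponds to a choice of $k$ cut edges, one in each gap between consecutive $s_i$'s. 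A case analysis on which of the three possible edges in the size-$2$ gap between $y_0$ and $y_3$ is cut shows that in every case some path must consist of three consecutive cycle vertices whose middle is $y_0$, $y_1$, or $y_2$; since each of these is a source or a sink, no such path can be directed, contradicting the existence of $\sP$.

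For the ``if'' direction, first note that every proper induced subdigraph of $D$ has as underlying graph a disjoint union of paths, which is perfect; by the cited result of Berge on digraphs with perfect underlying graphs, all proper induced subdigraphs of $D$ are themselves $\alpha$-diperfect. It therefore suffices to produce, for each maximum stable set $S$ of $D$ itself, a path partition orthogonal to $S$. Relabel the cycle so that $S=\{y_0,y_3,y_5,\ldots,y_{2k-1}\}$. Applying the hypothesis that $D$ is not anti-directed to this particular labeling yields some vertex $y_j$, with $j\in\{0,1,2,3,5,7,\ldots,2k-1\}$, that is neither a source nor a sink in $D$. I would then split into cases according to $j$ and in each case exhibit an explicit orthogonal partition in which the path through $y_j$ is a $3$-vertex arc having $y_j$ as its middle vertex, while all remaining paths are $2$-vertex arcs (which are trivially directed since each cycle edge carries at least one arc). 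For example, when $j=1$ I would take $P_0=\{y_0,y_1,y_2\}$ together with $P_i=\{y_{2i+1},y_{2i+2}\}$ for $i=1,\ldots,k-1$; analogous partitions cover the cases $j=2$, $j=0$, and $j\in\{3,5,\ldots,2k-1\}$.

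The hard part will be verifying that ``$y_j$ neither a source nor a sink'' truly suffices to make the $3$-vertex arc $(u,y_j,w)$ a directed path. This reduces to the following small combinatorial check: since each edge $uy_j$ and $y_jw$ carries at least one arc and $y_j$ has both an in-arc and an out-arc, at least one of the two orderings $u\to y_j\to w$ or $w\to y_j\to u$ has both of its required arcs present. A short case analysis on which of the four possible arcs incident to $y_j$ are missing handles this and completes the proof.
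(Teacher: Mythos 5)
The paper does not actually prove Theorem~\ref{th:anti-directed}; it is quoted from Berge~\cite{Berge1982b} without proof, so there is no in-paper argument to compare yours against. Judged on its own merits, your plan is essentially sound. For sufficiency, the reduction to checking only $D$ itself (every proper induced subdigraph of a cycle has a perfect underlying graph, hence is $\alpha$-diperfect by the cited result of Berge), the normalization of an arbitrary maximum stable set to $\{y_0,y_3,y_5,\ldots,y_{2k-1}\}$ (the $k$ gaps of a maximum stable set of $C_{2k+1}$ have total size $k+1$, so exactly one gap has size two), the extraction of a non-source, non-sink $y_j$ with $j\in\{0,1,2,3,5,\ldots,2k-1\}$ from the failure of the anti-directed condition, and the closing check that such a $y_j$ is the middle vertex of a directed $3$-vertex path are all correct, and the explicit partitions you describe do exist in every case.

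One step in the necessity direction is stated too strongly. With exactly one cut edge per gap, the sub-case in which the cut edge of the long gap is $y_1y_2$ does \emph{not} produce a $3$-vertex path with middle $y_0$, $y_1$ or $y_2$: since $y_0$ and $y_3$ cannot be internal, it forces the paths $(y_0,y_1)$ and $(y_2,y_3)$, the forcing then cascades around the cycle ($y_3y_4$ must be cut, so $(y_4,y_5)$ is a path, and so on), and the contradiction that finally appears is a counting one ($y_{2k}$ is left uncovered, or equivalently the gap $\{y_{2k}\}$ acquires two cut edges), not a forbidden middle vertex. A cleaner repair subsumes all three of your cases at once: since $k$ paths cover $2k+1$ vertices, some path $P$ has at least three vertices; its unique $S$-vertex must be an endpoint, so $P$ contains a contiguous run of at least two non-$S$ vertices of the cycle; the only such run is $\{y_1,y_2\}$, so $P$ is $(y_0,y_1,y_2)$ or $(y_1,y_2,y_3)$, and its middle vertex is a source or a sink, which cannot happen on a directed path.
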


Analogously to Theorem~\ref{theo:strong-perfect}, Berge was interested in obtaining a characterization of the class of $\alpha$-diperfect digraphs in terms of forbidden subdigraphs.
In fact, he proposed the following conjecture.

\begin{conjec}[Berge~\cite{Berge1982b}]
\label{conj:berge-alpha-dip}
A digraph $D$ is $\alpha$-diperfect if and only if $D$ does not contain an anti-directed odd cycle as an induced subdigraph.
\end{conjec}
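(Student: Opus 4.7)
My plan is to establish the biconditional by proving each direction separately, with the reverse direction handled by a minimum-counterexample argument. The forward direction is the short part: if $D$ contains an anti-directed odd cycle $H$ as an induced subdigraph, then by Theorem~\ref{th:anti-directed}, $H$ is itself not $\alpha$-diperfect, so there is an induced subdigraph $H'$ of $H$ and a maximum stable set $S$ of $H'$ with no path partition of $H'$ orthogonal to $S$. Since $H'$ is also an induced subdigraph of $D$ and $\alpha$-diperfectness is hereditary by definition, $D$ cannot be $\alpha$-diperfect.

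For the reverse direction I would argue by minimum counterexample. Let $D$ be a digraph with the fewest vertices such that $D$ has no induced anti-directed odd cycle but is not $\alpha$-diperfect. By minimality, every proper induced subdigraph of $D$ is $\alpha$-diperfect, while $D$ itself admits a maximum stable set $S$ with no orthogonal path partition. The strategy is to choose an extremal path partition $\sP$ of $D$---for instance, one minimizing $|\sP|$ and, subject to that, maximizing $\sum_{P \in \sP} \min(1, |V(P) \cap S|)$---and then analyze a violating path $P \in \sP$ (one with $|V(P) \cap S| \neq 1$). The hope is to use Theorem~\ref{th:gallai-milgram} and Corollary~\ref{coro:join-paths} either to repair $\sP$ or to expose a chordless alternating substructure. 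Concretely, given a violating configuration I would pick an $S$-vertex $s$ outside $P$ (or two $S$-vertices inside $P$) and trace an alternating sequence of arcs between $S$ and the paths of $\sP$; applying the inductive hypothesis to carefully chosen deletions $D - v$, such alternations either propagate or close up into a cycle. When the cycle is forced to be odd and chordless, its structure together with the maximality of $S$ should yield precisely the source/sink alternation defining an anti-directed odd cycle, contradicting our assumption on $D$.

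The principal obstacle will be this last step: certifying that the cycle produced is of odd length at least five, is chordless in $D$, and carries exactly the source/sink alternation at $y_0,y_1,y_2,y_3,y_5,y_7,\ldots,y_{2k-1}$. Alternating-walk arguments of this flavor typically need supplementary lemmas to guarantee chordlessness (since any chord could be exploited via Corollary~\ref{coro:join-paths} to shorten $\sP$) and to guarantee odd parity (since an even closing cycle would not be an anti-directed odd cycle, and would need to be ruled out by a swap argument reducing the chosen extremal parameter). I would therefore expect to need an intermediate lemma isolating a \emph{minimal obstruction} to orthogonality around $S$---a minimal vertex set in $V(D)$ carrying $S$ together with an arc pattern witnessing non-orthogonality---and then to verify by case analysis on the in/out-degrees at each vertex of this minimal obstruction that it must realize the anti-directed pattern. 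This structural case analysis is where I anticipate spending the bulk of the technical effort.
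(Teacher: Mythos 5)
You have set out to prove a statement that is false, and the paper in question exists precisely to refute it. Your forward direction is fine (it is Theorem~\ref{th:anti-directed} plus heredity of $\alpha$-diperfectness), but the reverse direction cannot be rescued by any minimum-counterexample or alternating-walk machinery, because explicit counterexamples exist. For each $k \geq 3$, consider the orientation $\vec{D}_{2k+1}$ of $\overline{C_{2k+1}}$ in which there is no arc $(x_j, x_i)$ for $0 \leq i < j \leq 2k$. The set $\{x_0, x_{2k}\}$ is a maximum stable set (here $\alpha(D) = 2$), and along any directed path of $\vec{D}_{2k+1}$ the indices strictly increase while consecutive indices $x_i, x_{i+1}$ are non-adjacent; hence every path in $D - x_0$ or $D - x_{2k}$ has at most $k$ vertices, whereas an orthogonal partition into two paths would force a path on at least $\bigl\lceil (2k+1)/2 \bigr\rceil = k+1$ vertices avoiding one of $x_0, x_{2k}$ (Lemma~\ref{lem:lambda-alpha}). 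So $\vec{D}_{2k+1}$ is not $\alpha$-diperfect (this is the necessity half of Theorem~\ref{th:carac-alpha-dip-barC2k1}). Yet for $k \geq 3$ it contains no anti-directed odd cycle: an induced $C_{2\ell+1}$ with $\ell \geq 2$ in $\overline{C_{2k+1}}$ would correspond to an induced $\overline{C_{2\ell+1}}$ in the cycle $C_{2k+1}$, which is impossible. So your hypothesized minimal counterexample $D$ genuinely exists (e.g.\ $\vec{D}_7$ on seven vertices), and your argument must fail on it.

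It is worth pinpointing where your sketch would collapse: the final step, in which you hope the violating configuration ``closes up'' into an odd chordless cycle carrying the source/sink alternation. In $\vec{D}_{2k+1}$ with $k \geq 3$ the underlying graph has independence number two and no chordless cycle of length five or more at all, so the obstruction you intend to extract simply is not present in the digraph; the non-orthogonality is witnessed instead by the global path-length bound above, which no local repair of $\sP$ via Corollary~\ref{coro:join-paths} can circumvent. The statement you were given is labeled a conjecture because that is what it is---Berge's Conjecture~\ref{conj:berge-alpha-dip}---and the paper's contribution is the infinite family $\{\vec{D}_{2k+1} : k \geq 3\}$ disproving it. A correct treatment of this statement is a disproof, not a proof: one verifies the two facts above (no orthogonal partition for $\{x_0,x_{2k}\}$; no induced anti-directed odd cycle), as the paper does.
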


Motivated by Berge's Conjecture, Sambinelli, Silva and Lee~\cite{tesemaycon2018} proposed in 2018 a similar conjecture. Before we state it, we need some definitons. A digraph $D$ is \emph{Begin-End-diperfect} or simply \emph{BE-diperfect} if every induced subdigraph $H$ of $D$ satisfies the following properties: (i) for every maximum stable set $S$ of $D$ there is a path partition $\sP$ of $D$ orthogonal to $S$ and (ii) every path $P \in \sP$ starts or ends at a vertex of $S$.
We say that a super-orientation $D$ of an odd cycle is a \emph{blocking odd cycle} if $U(D) =(y_0,\ldots,y_{2k},y_0)$ with $k\geq1$ and each of $y_0$ and $y_1$ is either a source or a sink in $D$. Figure~\ref{fig:blocking-odd-cycles} shows examples of blocking odd cycles.

\begin{figure}[htb]
    \centering
    \subfloat[The set $\{y_2\}$ is a maximum stable set but there is no hamiltonian path which starts or ends at $y_2$.]{\includegraphics{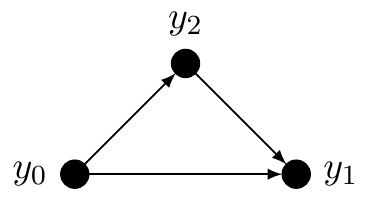}}
    \qquad
    \qquad
    \subfloat[The set $S=\{y_2, y_4\}$ is a maximum stable set but there is no path partition $\sP$ orthogonal to $S$ in which every path $P \in \sP$ starts or ends at a vertex of $S$. ]{\includegraphics{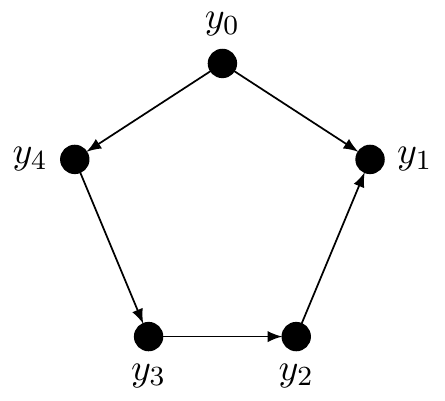}}
    \caption{Examples of blocking odd cycles.}
    \label{fig:blocking-odd-cycles}
\end{figure} 

\begin{conjec}[Sambinelli, Silva and Lee~\cite{tesemaycon2018}]
\label{conj:be-dip}
A digraph $D$ is $BE$-diperfect if and only if $D$ does not contain a blocking odd cycle as an induced subdigraph.
\end{conjec}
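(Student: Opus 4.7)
The plan is to prove both directions of the biconditional, splitting on the contrapositive reformulation of the forward direction.

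For the \emph{only if} direction, I would show that every blocking odd cycle $D$ fails the BE-diperfect property, which already suffices since BE-diperfectness demands the property on every induced subdigraph. Let $U(D) = (y_0, \ldots, y_{2k}, y_0)$ and, after possibly reversing all arcs of $D$, assume $y_0$ is a source; since the arc on edge $y_0 y_1$ must leave $y_0$, this forces $y_1$ to be a sink. I would then exhibit a maximum stable set $S$ disjoint from $\{y_0, y_1\}$: the natural choice for $k \geq 2$ is an independent set of size $k$ contained in $\{y_2, y_3, \ldots, y_{2k}\}$, and for $k = 1$ one takes $S = \{y_2\}$, mirroring the two examples in Figure~\ref{fig:blocking-odd-cycles}. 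A short case analysis then shows that in any path partition $\sP$ orthogonal to $S$, the paths containing $y_0$ and $y_1$ are severely constrained by the source/sink conditions, and at least one path must have its unique $S$-vertex strictly in its interior, violating the BE requirement.

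For the \emph{if} direction, the plan is to induct on $|V(D)|$, in the spirit of Berge's argument behind Theorem~\ref{th:anti-directed}. Fix an induced subdigraph $H$ (which still has no induced blocking odd cycle) and a maximum stable set $S$ of $H$. By Theorem~\ref{th:gallai-milgram}, pick a minimum path partition $\sP_0$ orthogonal to $S$; if every path of $\sP_0$ already starts or ends in $S$ we are done. Otherwise, choose a path $P = v_0 v_1 \cdots v_\ell \in \sP_0$ whose unique $S$-vertex $v_j$ satisfies $0 < j < \ell$. I would attempt local exchange moves on $\sP_0$ --- rerouting a prefix or suffix of $P$ through another path, splitting $P$ into two paths at $v_j$ and absorbing the ``loose'' piece elsewhere via Corollary~\ref{coro:join-paths}, or deleting a well-chosen vertex to invoke the inductive hypothesis on a smaller induced subdigraph --- each move aimed at decreasing a potential function counting paths whose $S$-vertex is interior, while preserving orthogonality to $S$ and the path-partition count.

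The main obstacle, and the reason the conjecture is not routine, lies in the last step: showing that when no such local exchange is available, one can extract from the ``stuck'' configuration an induced odd cycle of $H$ in which two consecutive vertices are each a source or a sink, contradicting the hypothesis. This is precisely the type of structural reduction that, as this very paper demonstrates for Conjecture~\ref{conj:berge-alpha-dip}, can fail in unexpected ways: oriented complements of odd cycles can masquerade as minimal obstructions without being anti-directed odd cycles. Thus the proposed proof hinges on ruling out analogous ``hidden'' obstructions for the BE-variant; if this step succeeds, the induction closes, while if it fails, the attempt naturally points toward a candidate counterexample to Conjecture~\ref{conj:be-dip} in the same spirit as the family constructed in this paper.
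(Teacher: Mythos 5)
The statement you are trying to prove is Conjecture~\ref{conj:be-dip}, which the paper does not prove and does not claim to prove: it is an open conjecture from~\cite{tesemaycon2018}, verified so far only for special classes of digraphs (\cite{sambinelli2022alpha}, \cite{freitas2022some}, \cite{freitas2022-3anti}). There is therefore no ``paper's own proof'' to compare against, and your proposal does not close the gap. The \emph{only if} direction of your plan is sound in outline --- BE-diperfection is hereditary by definition, so it suffices to show that each blocking odd cycle itself fails the property, and your choice of a maximum stable set of size $k$ inside $\{y_2,\ldots,y_{2k}\}$ avoiding the source/sink pair $y_0,y_1$ is the right starting point --- but the ``short case analysis'' is asserted rather than carried out, and it is the entire content of that direction.

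The essential gap is in the \emph{if} direction, and you identify it yourself: the claim that a ``stuck'' configuration of local exchanges must yield an induced blocking odd cycle is precisely the open content of the conjecture, not a step one can defer. The paper is in fact a cautionary tale for exactly this kind of argument: for the $\alpha$-diperfect analogue (Conjecture~\ref{conj:berge-alpha-dip}), the digraphs $\vec{D}_{2k+1}$ are minimal obstructions that are \emph{not} among the conjectured forbidden subdigraphs, so the analogous ``extract a forbidden structure from the stuck configuration'' step provably fails there. Nothing in your proposal rules out a hidden obstruction of the same kind for the BE-variant; the paper only observes that the particular family $\vec{D}_{2k+1}$ does contain blocking odd cycles and hence does not refute Conjecture~\ref{conj:be-dip}. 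As written, your text is a research plan with an honestly flagged missing lemma, not a proof.
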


Recently, Conjectures~\ref{conj:berge-alpha-dip} and~\ref{conj:be-dip} were verified for some specific classes of digraphs (see~\cite{sambinelli2022alpha}, \cite{freitas2022some} and \cite{freitas2022-3anti}).

In this paper, we show that Berge's Conjecture (Conjecture ~\ref{conj:berge-alpha-dip}) is false for arbitrary digraphs. We present and characterize super-orientations of complements of odd cycles with at least five vertices that are not $\alpha$-diperfect. One can easily check that a complement of an odd cycle with at least five vertices cannot contain an induced odd cycle with at least five vertices. Thus every super-orientation of the complement of an odd cycle is free from anti-directed odd cycles.
On the other hand, all these counterexamples to Conjecture~\ref{conj:berge-alpha-dip} contain blocking odd cycles as induced subdigraphs. So these digraphs are not counterexamples to Conjecture~\ref{conj:be-dip}.
In fact, it can be shown that a super-orientation $D$ of the complement of an odd cycle with at least five vertices is BE-diperfect if and only if $D$ does not contain an blocking odd cycle as an induced subdigraph~\cite{be-comp-ciclo-impar-2022}.
 
\section{$\alpha$-Diperfect super-orientations of $\overline{C_{2k+1}}$}

We start this section by presenting a necessary condition for a digraph to be $\alpha$-diperfect.

\begin{lemma}
\label{lem:lambda-alpha}
Let $D$ be a digraph and let $S$ be a maximum stable set of $D$ of size at least two. If $D$ is $\alpha$-diperfect, then 
$\lambda(D - u) \geq \Bigl\lceil\frac{|V(D)|}{\alpha(D)}\Bigr\rceil$ or $\lambda(D - v) \geq \Bigl\lceil\frac{|V(D)|}{\alpha(D)}\Bigl\rceil$, 
for every pair of distinct vertices $u, v \in S$.
\end{lemma}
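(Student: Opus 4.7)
The plan is to apply the $\alpha$-diperfect property directly to $D$ itself (taking $H = D$ in the definition) using the given maximum stable set $S$, and then extract the desired long path via a pigeonhole argument combined with the orthogonality condition.

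More concretely, I would first invoke the hypothesis that $D$ is $\alpha$-diperfect to obtain a path partition $\sP$ of $D$ that is orthogonal to $S$, meaning $|V(P) \cap S| = 1$ for every $P \in \sP$. Summing this identity over all paths in $\sP$ and using the fact that $\sP$ partitions $V(D)$ yields $|\sP| = |S| = \alpha(D)$. Since the paths of $\sP$ partition the $|V(D)|$ vertices into exactly $\alpha(D)$ classes, a straightforward pigeonhole argument guarantees the existence of a path $P^{*} \in \sP$ with
\[
|V(P^{*})| \;\geq\; \Bigl\lceil \tfrac{|V(D)|}{\alpha(D)} \Bigr\rceil.
\]

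Now I would use the orthogonality of $\sP$ and $S$ to finish. By orthogonality, $P^{*}$ contains exactly one vertex of $S$, so $P^{*}$ cannot contain both $u$ and $v$ (since $u, v \in S$ are distinct). Hence at least one of $u, v$ — say $u$ — does not lie on $P^{*}$, which means that $P^{*}$ is a path of $D - u$, and therefore $\lambda(D - u) \geq |V(P^{*})| \geq \lceil |V(D)|/\alpha(D) \rceil$ as required. The condition $|S| \geq 2$ is used only to ensure the statement is non-vacuous (i.e., that a pair $u, v$ exists). I do not anticipate any real obstacle here: the proof is essentially a one-line pigeonhole argument once the defining property of $\alpha$-diperfection is unpacked for $S$ inside $D$ itself, with no need to pass to a proper induced subdigraph.
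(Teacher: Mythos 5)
Your proof is correct and follows essentially the same route as the paper: obtain a path partition orthogonal to $S$, apply pigeonhole to find a path of order at least $\lceil |V(D)|/\alpha(D)\rceil$, and note that this path misses at least one of $u,v$ because it meets $S$ in exactly one vertex. Your write-up just spells out the pigeonhole and orthogonality steps that the paper leaves implicit.
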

\begin{proof}
Let $\sP$ be a path partition orthogonal 
to $S$. Clearly, at least one path of $\sP$ must have size at least $\Bigl\lceil\frac{|V(D)|}{\alpha(D)}\Bigr\rceil$. Since $u$ and $v$ belong to distinct paths of $\sP$, the result follows.
\end{proof}

Let $G$ be a graph isomorphic to $\overline{C_{2k+1}}$, with $k\geq2$ and let $D$ be a super-orientation of $D$.
Henceforth, we may assume that the vertices of $G$ (and of $D$) are labeled as $x_0,\ldots,x_{2k}$ so that the cycle $\overline{G} = \overline{U(D)}$ is $(x_0,\ldots,x_{2k}, x_0)$. So, the non-neighbors of $x_i$ are $x_{i-1}$ and $x_{i+1}$, where the indexes are taken modulo $2k+1$, as depicted in Figure~\ref{fig:0-label-barc9}. Moreover, note that $\alpha(G)=\alpha(D)=2$ and each pair $\{x_i, x_{i+1}\}$ is a maximum stable set of $G$ (and hence, of $D$).

\begin{figure}[htb]
    \centering
    \subfloat{\includegraphics{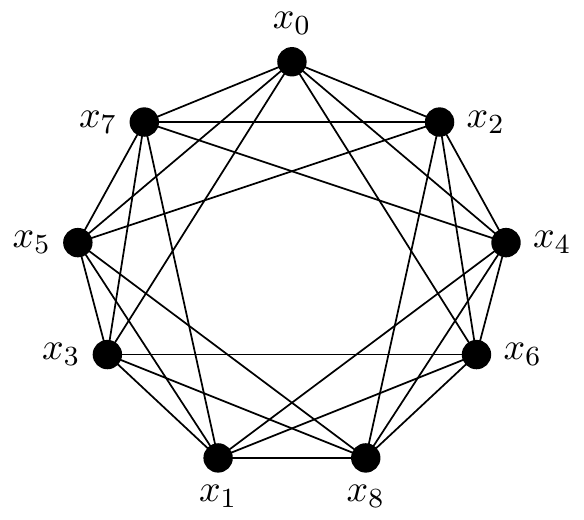}}
    \caption{Labeling for $\overline{C_9}$.}
    \label{fig:0-label-barc9}
\end{figure}

We say that $D$ is a \emph{$\vec{D}_{2k+1}$} if  there is no arc $(x_j, x_i)$, for $0 \leq i < j \leq  2k$. 
%Note that, if $D$ is a $\vec{D}_{2k+1}$, then $\{(x_0, x_2, \ldots, x_{2k}), (x_{1}, x_{3},\allowbreak  \ldots, x_{2k-1})\}$ is a path partition of $D$.
Figure~\ref{fig:x0x2k-path} shows the digraphs $\vec{D}_{5}$ and $\vec{D}_{7}$.

% {\bf Orlando: não sei agora se vale a pena mencionar o tal path partition acima. Você (Carol) menciona este path partition explicitamente nas provas ou agora usa simplesmente o fato das arestas "crescerem de índice" para construir os caminhos desejados?}

\begin{figure}[htb]
    \centering
    \subfloat[\label{fig:anti-c5}Digraph $\vec{D}_{5}$.]{\includegraphics{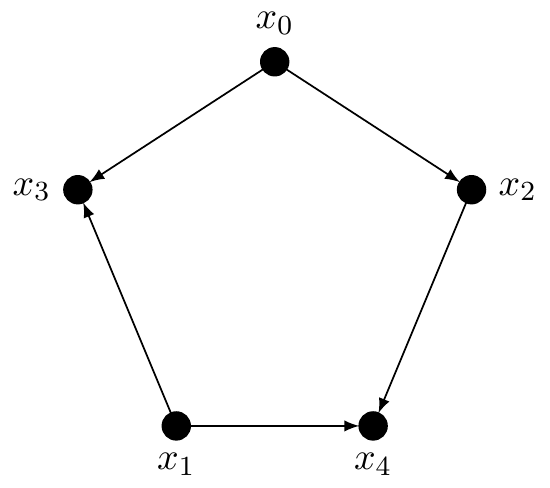}}
    \qquad
    \qquad
    \subfloat[Digraph $\vec{D}_{7}$. ]{\includegraphics{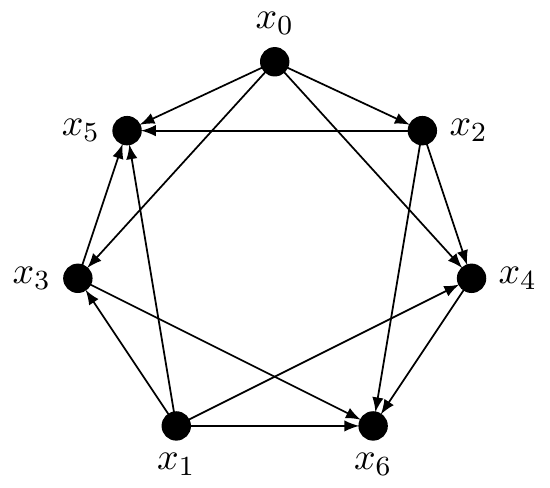}}
    \caption{Examples of $\vec{D}_{2k+1}$.}
    \label{fig:x0x2k-path}
\end{figure}

Our goal is to show that a super-orientation $D$ of the complement of an odd cycle with at least five vertices has a path partition orthogonal to $\{x_0, x_{2k}\}$ if and only if $D$ is not isomorphic to $\vec{D}_{2k+1}$. (Theorem~\ref{th:carac-alpha-dip-barC2k1}). Note that this implies that there exists an infinite family of counterexamples of Berge's conjecture. One may verify that the super-orientation of a $\overline{C_5}$ depicted in Figure~\ref{fig:x0x2k-path}(a) does not have a path partition orthogonal to $\{x_0, x_4\}$. Note that this digraph is also an anti-directed odd cycle. Similarly, the super-orientation of a $\overline{C_7}$ depicted in Figure~\ref{fig:x0x2k-path}(b) does not have a path partition orthogonal to $\{x_0, x_6\}$. 

Let $G$ be a graph isomorphic to $\overline{C_{2k+1}}$ for $k \geq 3$. Let $G'$ be the graph obtained from $G$ by deleting a pair of non-adjacent vertices $x_i$ and $x_{i+1}$ and the edge $x_{i-1}x_{i+2}$. Note that $G'$ is isomorphic to $\overline{C_{2k-1}}$ because $G'$ is the complement of the cycle $(x_0,\ldots,x_{i-1},\allowbreak x_{i+2},\ldots,x_{2k},x_0)$. This observation allows us to obtain a path partition orthogonal to $\{x_0,x_{2k}\}$ of a super-orientation $D$ of $G$ when a certain subdigraph of $D$ admits a specific path partition. 
%is $\alpha$-diperfect.

% Changed by Orlando

\begin{lemma}\label{lem:alpha-subdigraph-x2k}
Let $D$ be a super-orientation of a $\overline{C_{2k+1}}$ with $k\geq3$. Let $D'$ be the super-orientation of a $\overline{C_{2k-1}}$ obtained from $D$ by deleting the vertices $x_{2k-1}$ and $x_{2k}$ and the arc between $x_0$ and $x_{2k-2}$. If $D'$ has a path partition orthogonal to $\{x_0,x_{2k-2}\}$, then $D$ has a path partition orthogonal to $\{x_0,x_{2k}\}$.
\end{lemma}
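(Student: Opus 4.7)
The plan is to start from a path partition $\sP' = \{P_1, P_2\}$ of $D'$ orthogonal to $\{x_0, x_{2k-2}\}$, with $x_0 \in V(P_1)$ and $x_{2k-2} \in V(P_2)$, and extend it to a path partition of $D$ orthogonal to $\{x_0, x_{2k}\}$ by absorbing the two missing vertices $x_{2k-1}$ and $x_{2k}$ using Corollary~\ref{coro:join-paths}.

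First I would check that $P_1$ and $P_2$ remain paths in $D$. The only arcs of $D$ that do not appear in $D'$ are the arcs (or digon) between $x_0$ and $x_{2k-2}$, together with those incident to $x_{2k-1}$ or $x_{2k}$. Since $x_0 \in V(P_1)$ and $x_{2k-2} \in V(P_2)$ lie in different paths of $\sP'$, no such arc between them is used by either $P_i$, and of course neither path uses $x_{2k-1}$ or $x_{2k}$. Hence $P_1$ and $P_2$ are paths of $D$.

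Next I would extend $P_1$ by $x_{2k-1}$ and $P_2$ by $x_{2k}$. Recall that in $\overline{C_{2k+1}}$ the non-neighbors of $x_{2k-1}$ are exactly $x_{2k-2}$ and $x_{2k}$, and the non-neighbors of $x_{2k}$ are exactly $x_{2k-1}$ and $x_0$. Because $x_{2k-2} \in V(P_2)$, we have $V(P_1) \subseteq V(D)\setminus\{x_{2k-2},x_{2k-1},x_{2k}\}$, so $x_{2k-1}$ is adjacent to every vertex of $P_1$ in $D$. Corollary~\ref{coro:join-paths} then yields a path $P_1'$ of $D$ with $V(P_1') = V(P_1) \cup \{x_{2k-1}\}$. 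Analogously, $V(P_2) \subseteq V(D)\setminus\{x_0,x_{2k-1},x_{2k}\}$, so $x_{2k}$ is adjacent to every vertex of $P_2$, and Corollary~\ref{coro:join-paths} gives a path $P_2'$ with $V(P_2') = V(P_2) \cup \{x_{2k}\}$.

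Finally, $\{P_1', P_2'\}$ is the required path partition: it covers $V(D)$, and among $\{x_0, x_{2k}\}$ the path $P_1'$ contains only $x_0$ while $P_2'$ contains only $x_{2k}$. I do not anticipate any genuine obstacle; the structural content is entirely encoded in the adjacency pattern of $\overline{C_{2k+1}}$, which makes $x_{2k-1}$ and $x_{2k}$ completely adjacent to $P_1$ and $P_2$ respectively, so that Corollary~\ref{coro:join-paths} applies directly.
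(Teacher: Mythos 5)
Your proof is correct and follows essentially the same route as the paper: take the orthogonal partition $\{P_1,P_2\}$ of $D'$, observe that the non-neighbors of $x_{2k-1}$ (resp.\ $x_{2k}$) are exactly $x_{2k-2},x_{2k}$ (resp.\ $x_0,x_{2k-1}$), and absorb each into the appropriate path via Corollary~\ref{coro:join-paths}. The only difference is your preliminary check that $P_1,P_2$ remain paths in $D$, which is immediate since $D'$ is a subdigraph of $D$ and the paper omits it.
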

\begin{proof}
Suppose that $D'$ has a path partition $\sP = \{P_1,P_2\}$  orthogonal to $\{x_0, x_{2k-2}\}$. Without loss of generality, we may assume that $x_0 \in V(P_1)$ and $x_{2k-2} \in V(P_2)$. 
Since the only non-neighbors of $x_{2k-1}$ in $D$ are $x_{2k-2}$ and $x_{2k}$, it follows that $x_{2k-1}$ is adjacent, in $D$, to every vertex of $P_1$. So, by Corollary~\ref{coro:join-paths}, there is a path $P_1'$ in $D$ such that $V(P_1') = V(P_1) \cup \{x_{2k-1}\}$. 
Similarly, the only non-neighbors of $x_{2k}$ are $x_0$ and $x_{2k-1}$. So, it follows that $x_{2k}$ is adjacent, in $D$, to every vertex of $P_2$. By Corollary~\ref{coro:join-paths}, there is a path $P_2'$ in $D$ such that $V(P_2') = V(P_2) \cup \{x_{2k}\}$.
Thus, $\{P_1', P_2'\}$ is a path partition of $D$ orthogonal to $\{x_0, x_{2k}\}$.
\end{proof}

By adjusting notation, we immediately have the following corollary.

\begin{coro}\label{coro:alpha-subdigraph-x0}
Let $D$ be a super-orientation of a $\overline{C_{2k+1}}$ for $k\geq3$. Let $D'$ be the super-orientation of a $\overline{C_{2k-1}}$ obtained from $D$ by deleting the vertices $x_0$ and $x_1$ and the arc between $x_2$ and $x_{2k}$. If $D'$ has a path partition orthogonal to $\{x_2,x_{2k}\}$, then $D$ has a path partition orthogonal to $\{x_0,x_{2k}\}$.
\hfill \qed
\end{coro}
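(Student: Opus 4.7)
The plan is to observe that the corollary is a direct consequence of Lemma \ref{lem:alpha-subdigraph-x2k} under a suitable relabeling, which is what the phrase ``by adjusting notation'' anticipates. Concretely, I would use the reflection $y_i := x_{2k-i}$ for $i = 0, 1, \ldots, 2k$ (indices taken modulo $2k+1$). This reflection preserves the cyclic complement structure of $\overline{C_{2k+1}}$, since the non-neighbors of $y_i$ remain $y_{i-1}$ and $y_{i+1}$, so $D$ viewed under the labeling $y_0, \ldots, y_{2k}$ is again a super-orientation of $\overline{C_{2k+1}}$ in the same framework set up before the lemma.

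Next, I would track how the data of the corollary translates. The vertices $x_0, x_1$ being deleted become $y_{2k}, y_{2k-1}$; the arc between $x_2$ and $x_{2k}$ becomes the arc between $y_{2k-2}$ and $y_0$; the stable set $\{x_2, x_{2k}\}$ becomes $\{y_0, y_{2k-2}\}$; and the target set $\{x_0, x_{2k}\}$ becomes $\{y_0, y_{2k}\}$. These match exactly the hypotheses and conclusion of Lemma \ref{lem:alpha-subdigraph-x2k} applied with the $y$-labeling, so invoking that lemma yields the result.

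If a direct argument is preferred instead of a relabeling, I would mirror the proof of Lemma \ref{lem:alpha-subdigraph-x2k}: take a path partition $\sP = \{P_1, P_2\}$ of $D'$ orthogonal to $\{x_2, x_{2k}\}$ with $x_2 \in V(P_1)$ and $x_{2k} \in V(P_2)$. Since the only non-neighbors of $x_0$ in $D$ are $x_1$ and $x_{2k}$, neither of which lies in $P_1$, Corollary \ref{coro:join-paths} yields a path $P_1'$ with $V(P_1') = V(P_1) \cup \{x_0\}$. Similarly, the only non-neighbors of $x_1$ are $x_0$ and $x_2$, neither in $P_2$, so there is a path $P_2'$ with $V(P_2') = V(P_2) \cup \{x_1\}$. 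Then $\{P_1', P_2'\}$ partitions $V(D)$ and is orthogonal to $\{x_0, x_{2k}\}$.

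The only subtle point — hardly an obstacle — is checking that under the reflection the deleted non-edge is also handled consistently, i.e., that the non-edge $x_0 x_{2k-2}$ in $D'$ of the lemma corresponds to the non-edge $x_2 x_{2k}$ in $D'$ of the corollary; the map $x_i \mapsto x_{2k-i}$ sends one to the other, so the translation is clean.
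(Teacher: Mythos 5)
Your proposal is correct and matches the paper exactly: the paper proves this corollary solely by the remark ``By adjusting notation, we immediately have the following corollary,'' and your reflection $x_i \mapsto x_{2k-i}$ is precisely the notational adjustment that reduces it to Lemma~\ref{lem:alpha-subdigraph-x2k}. Your alternative direct argument mirroring that lemma's proof is also valid, but it is not needed.
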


\begin{theo}\label{th:carac-alpha-dip-barC2k1}
Let $D$ be a super-orientation of a $\overline{C_{2k+1}}$ for $k\geq2$. Then, $D$ has a path partition orthogonal to $\{x_0, x_{2k}\}$ if and only if $D$ is not isomorphic to  $\vec{D}_{2k+1}$.
\end{theo}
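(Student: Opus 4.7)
The plan is to prove the theorem by induction on $k$, using Lemma~\ref{lem:alpha-subdigraph-x2k} and Corollary~\ref{coro:alpha-subdigraph-x0} as the engine of the reduction.

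For the forward direction I would show directly that if $D$ is isomorphic to $\vec{D}_{2k+1}$, then $D$ has no path partition orthogonal to $\{x_0, x_{2k}\}$. In $\vec{D}_{2k+1}$ itself the vertices $x_0, x_1$ are sources and $x_{2k-1}, x_{2k}$ are sinks, so any candidate two-path partition would be forced into the shape $P_1=(x_0,\ldots,x_{2k-1})$ and $P_2=(x_1,\ldots,x_{2k})$; a downward cascade on the in- and out-neighborhoods of $x_{2k-2}, x_{2k-3}, \ldots$ (noting that $x_{2k-2}$ has $x_{2k}$ as its unique forward neighbor of strictly larger index) would then contradict the disjointness of $V(P_1)$ and $V(P_2)$. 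The case of a reflected copy follows from the symmetry $x_i \mapsto x_{2k-i}$, which fixes $\{x_0, x_{2k}\}$ as a set.

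For the converse, the base case $k=2$ is a finite check on super-orientations of $\overline{C_5}$: since $D$ is not $\vec{D}_5$, at least one of the five candidate backward arcs $(x_2,x_0),(x_3,x_0),(x_3,x_1),(x_4,x_1),(x_4,x_2)$ is present, and for each configuration I would exhibit an explicit two-path partition orthogonal to $\{x_0, x_4\}$—for example $P_1=(x_2,x_0,x_3)$, $P_2=(x_1,x_4)$ when $(x_2,x_0)$ and $(x_0,x_3)$ are both present, with analogous short templates such as $P_1=(x_1,x_3,x_0)$, $P_2=(x_2,x_4)$ covering the remaining sub-configurations.

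For the inductive step $k\geq 3$, let $D'$ be the reduction from Lemma~\ref{lem:alpha-subdigraph-x2k} (deleting $x_{2k-1}, x_{2k}$ and the arc between $x_0$ and $x_{2k-2}$) and $D''$ the reduction from Corollary~\ref{coro:alpha-subdigraph-x0} (deleting $x_0, x_1$ and the arc between $x_2$ and $x_{2k}$). If either $D'$ or $D''$ fails to be isomorphic to $\vec{D}_{2k-1}$, the induction hypothesis would yield a partition of that reduction orthogonal to the appropriate two-element stable set, and the corresponding lifting lemma would transport it to the desired partition of $D$. The main obstacle is the residual case where both $D'$ and $D''$ are isomorphic to $\vec{D}_{2k-1}$: intersecting the two constraints would pin every backward arc of $D$ to the explicit set $\{(x_{2k-2},x_0),(x_{2k-1},x_0),(x_{2k-1},x_1),(x_{2k},x_1),(x_{2k},x_2)\}$, and every other edge of $D$ would then carry its forward arc. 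Since $D$ is not $\vec{D}_{2k+1}$, at least one of these five backward arcs is present, and for each choice I would construct a partition by splicing the chosen backward arc with the forward even chain $(x_0,x_2,\ldots,x_{2k-2})$ and odd chain $(x_1,x_3,\ldots,x_{2k-1})$, routing $x_{2k-1}$, $x_{2k-2}$, or $x_{2k}$ to the appropriate path as the chosen arc dictates.
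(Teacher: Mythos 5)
Your proposal follows essentially the same route as the paper's proof: induction on $k$, the two reductions of Lemma~\ref{lem:alpha-subdigraph-x2k} and Corollary~\ref{coro:alpha-subdigraph-x0}, the observation that when both reduced digraphs are isomorphic to $\vec{D}_{2k-1}$ every arc of $D$ is pinned down except on the five edges $x_0x_{2k-2}$, $x_0x_{2k-1}$, $x_1x_{2k-1}$, $x_1x_{2k}$, $x_2x_{2k}$, and the same endgame of exhibiting an explicit orthogonal partition for each of the five possible backward arcs. The two places you deviate are cosmetic: for necessity the paper bounds $\lambda(D-x_0)$ and $\lambda(D-x_{2k})$ by $k$ and invokes Lemma~\ref{lem:lambda-alpha}, whereas you argue directly from the forced endpoints; and for the base case $k=2$ the paper simply invokes Theorem~\ref{th:anti-directed} to conclude $D$ is an anti-directed odd cycle, hence isomorphic to $\vec{D}_5$. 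Both of your substitutes rest on the same underlying facts (indices strictly increase along any path of $\vec{D}_{2k+1}$ and must jump by at least two).

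One concrete caveat on your base case: branching only on \emph{which} of the five backward arcs is present cannot work as stated. A single backward arc does not determine enough of $D$ to write down a partition---any partition of $\overline{C_5}=C_5$ into two paths orthogonal to $\{x_0,x_4\}$ uses at least three arcs---and the all-backward orientation contains every one of your five backward arcs yet, being the reversed (reflected) copy of $\vec{D}_5$, admits no orthogonal partition. Your own templates implicitly require two specified arcs each, so the case split must be refined to complete orientations (or at least to suitable pairs of arcs). This is a finite, routine repair, but as written the five-way split is a gap; the paper avoids it entirely by appealing to Theorem~\ref{th:anti-directed}.
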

\begin{proof}
(Necessity) 
Assume that $D$ is isomorphic to $\vec{D}_{2k+1}$. 
We show that
$\lambda(D - x_0) < k+1$ and  $\lambda(D - x_{2k}) < k+1$, and the result follows immediately by Lemma~\ref{lem:lambda-alpha}.
Note that, by the Principle of Directional Duality, it suffices to prove that $\lambda(D - x_0) < k+1$.
Let $D' = D - x_0$. 
%Let $\sP'=\{P_1'=(x_2,x_4\ldots,x_{2k}), P_2 = (x_1,x_3\ldots,x_{2k-1})\}$. Note that $P_1'$ and $P_2$ have size $k$. 
Let $P=(v_1,\ldots,v_\ell)$ be a longest path of $D'$.  For each $t \in \{1, 2, \ldots, \ell\}$, let $r(t) = s$ if $v_t=x_s$.
By definition, for $j > i$, there is no arc $(x_j, x_i)$. Hence,
the sequence $r(1), r(2), \ldots , r(\ell)$ is strictly increasing. Towards a contradiction, suppose that $\ell > k$. So there must exist $j \in \{1,\ldots,\ell-1\}$ such that $v_j=x_i$ and $v_{j+1}=x_{i+1}$. However, this is a contradiction since $x_i$ is non-adjacent to $x_{i-1}$ and $x_{i+1}$.
Thus, the size of $P$ is at most $k$.

(Sufficiency) Suppose that $D$ does not have a path partition orthogonal to $\{x_0,x_{2k}\}$. We show that $D$ is isomorphic to $\vec{D}_{2k+1}$.
The proof follows by induction on $k$. If $k=2$, then $D$ is a super-orientation of a $\overline{C_5} = C_5$. By Theorem~\ref{th:anti-directed}, $D$ is an anti-directed odd cycle. It is easy to verify that $D$ is isomorphic to $\vec{D}_{5}$ (see Figure~\ref{fig:anti-c5}). So suppose that $k \geq 3$.
Let $D_1$ be the super-orientation of a $\overline{C_{2k-1}}$ obtained from $D$ by deleting the vertices $x_{2k-1}$ and $x_{2k}$ and the arc between $x_0$ and $x_{2k-2}$.
By Lemma~\ref{lem:alpha-subdigraph-x2k}, we may assume that $D_1$ has no path 
partition orthogonal to $\{x_0,x_{2k-2}\}$. Since the vertices of $D_1$ are labeled as $x_0,\ldots,x_{2k-2}$, we may apply the induction hypothesis and assume that $D_1$ is isomorphic to $\vec{D}_{2k-1}$. By the Principle of Directional Duality, we may assume that the following property holds:

\begin{enumerate}[(a)]
    \item for $0 \leq i < j \leq 2k-2$, there is no arc $(x_j, x_i)$ in $D_1$ (and hence, in $D$) (see Figure~\ref{fig:d1}).
\end{enumerate}

\begin{figure}[htb]
    \centering
    \subfloat[\label{fig:d1}Digraph $D_1$ is isomorphic to  $\vec{D}_{2k-1}$.]{\includegraphics{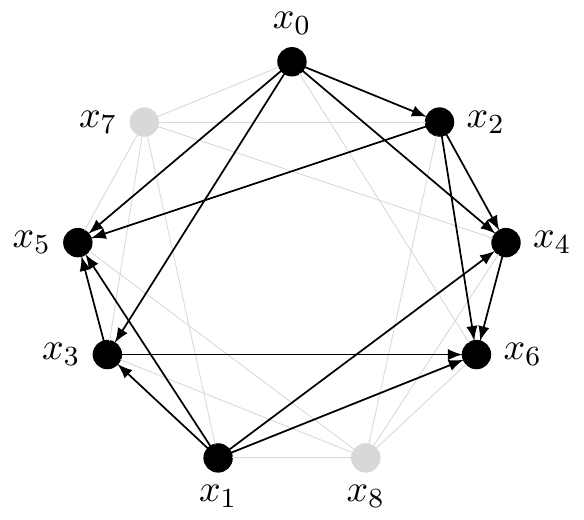}}
    \subfloat[\label{fig:d2}Digraph $D_2$ is isomorphic to  $\vec{D}_{2k-1}$.]{\includegraphics{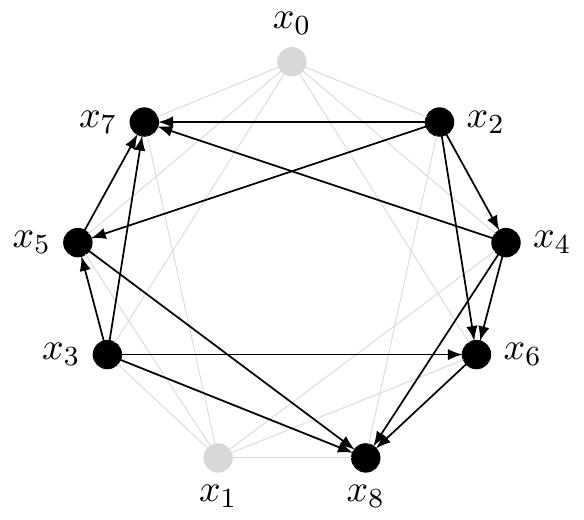}}\\
    \subfloat[\label{fig:d12}$D_1 \cup D_2$.]{\includegraphics{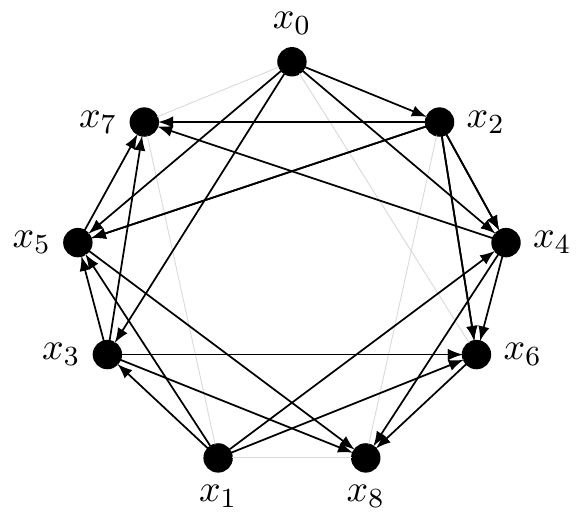}}
      \subfloat[\label{fig:d}Digraph $D$ is isomorphic to  $\vec{D}_{2k+1}$.]{\includegraphics{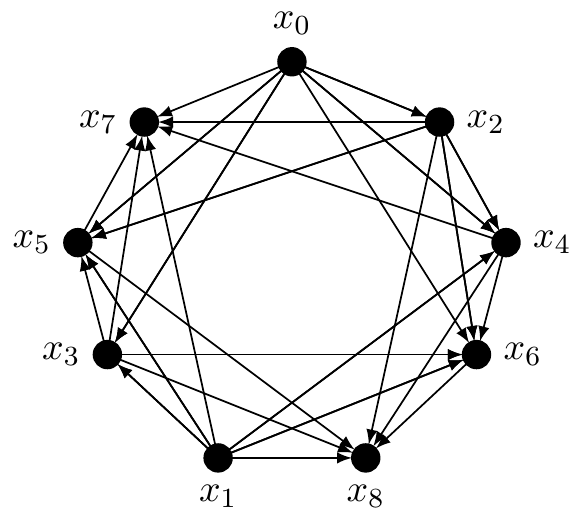}}
    \caption{Auxiliary illustration for the proof of Theorem~\ref{th:carac-alpha-dip-barC2k1}} Examples of $D_1$ and $D_2$ for $\overline{C_9}$ ($k=4$).
\end{figure}

Similarly, let $D_2$ be the super-orientation of a $\overline{C_{2k-1}}$ obtained from $D$ by deleting the vertices $x_0$ and $x_1$ and the arc between $x_2$ and $x_{2k}$. By Corollary~\ref{coro:alpha-subdigraph-x0}, we may assume that $D_2$ has no path partition orthogonal to $\{x_2, x_{2k}\}$. 
Since the vertices of $D_2$ are labeled as $x_2,\ldots,x_{2k}$, we may adjust notation and apply the induction hypothesis. So $D_2$ is also isomorphic to $\vec{D}_{2k-1}$. Moreover, since (a) holds, then $D_2$ must satisfy the following property:
\begin{enumerate}[(b)]
    \item for $2 \leq i < j \leq 2k$, there is no arc $(x_j, x_i)$ in $D_2$ (and hence, in $D$) (see Figure~\ref{fig:d2}).
\end{enumerate}

Let $D' = D_1 \cup D_2$. Note that $V(D) = V(D')$ and $E(U(D)) - E(U(D')) = \{x_0x_{2k-1},\allowbreak x_0x_{2k-2},\allowbreak x_1x_{2k-1},\allowbreak x_1x_{2k-2},\allowbreak x_2x_{2k}\}$ (see Figure~\ref{fig:d12}). 
%Moreover, by (a) and (b), it follows that $(x_0,x_2,\ldots,x_{2k})$ and $(x_1,x_3,\ldots,x_{2k-1})$ are paths of $D$.
Since (a) and (b) hold, it suffices to prove that $x_0 \mapsto \{x_{2k-1},x_{2k-2}\}$, $\{x_1,x_2\} \mapsto x_{2k}$ and $x_1 \mapsto x_{2k-1}$.
Towards a contradiction, suppose that $x_{2k-1}$ dominates $x_0$. Then, $\{(x_1,x_3\ldots,x_{2k-1},x_0), (x_2,x_4,\ldots,x_{2k})\}$ is a path partition of $D$ orthogonal to $\{x_0,x_{2k}\}$, a contradiction. Similarly, suppose that $x_{2k}$ dominates $x_1$. Then, $\{(x_{2k},x_1,x_3,\ldots,x_{2k-1}),\allowbreak (x_0,x_2,\ldots,x_{2k-2})\}$ is a path partition of $D$ orthogonal to $\{x_0,x_{2k}\}$, a contradiction.
Now suppose that $x_{2k-2}$ dominates $x_0$. Then $\{(x_1,x_3,\ldots,x_{2k-3},x_{2k}),\allowbreak (x_2,x_4,\ldots,\allowbreak x_{2k-2},\allowbreak x_0, x_{2k-1})\}$ is a path partition of $D$ orthogonal to $\{x_0,x_{2k}\}$, a contradiction. Similarly, suppose that $x_{2k}$ dominates $x_2$. Then, $\{(x_1,x_{2k},x_2,x_4,\ldots,x_{2k-2}), (x_0,x_3,x_5,\ldots,x_{2k-1})\}$ is a path partition of $D$ orthogonal to $\{x_0,x_{2k}\}$, a contradiction.
Finally, suppose that $x_{2k-1}$ dominates $x_1$. Then, $\{(x_3,x_5,\ldots,\allowbreak x_{2k-1}, x_1,  x_{2k}),(x_0,x_2,\ldots,x_{2k-2})\}$ is a path partition of $D$ orthogonal to $\{x_0,x_{2k}\}$, a contradiction.
Thus, $D$ is isomorphic to $\vec{D}_{2k+1}$ (see Figure~\ref{fig:d}).
\end{proof}

One may ask the following natural question.

\begin{question}
Let $D$ be a digraph which does not contain an induced anti-directed odd cycle or an induced $\vec{D}_{2k+1}$, with $k\geq 2$. 
Is it true that $D$ is $\alpha$-diperfect?
\end{question}

% Note that the proof of Theorem~\ref{th:carac-alpha-dip-barC2k1} shows that a non-$\alpha$-diperfect  super-orientation of a $\overline{C_{2k+1}}$ with $k\geq2$ has a pair of vertices that do not satisfy the condition given by Lemma~\ref{lem:lambda-alpha}.
% Let $D=(y_0,\ldots,y_{2k},y_0)$ with $k\geq2$ be an anti-directed odd cycle. Also, note that $D$ has a maximum stable set $S$ such that $\{y_0, y_{2k-1}\} \subseteq S$ and $\lambda(D - y_0)=\lambda(D-y_{2k-1})=2$. So it is also natural to ask the following.  

% \begin{question}
% Let $D$ be a digraph in which every induced subdigraph $D'$ of $D$ satisfies the following property: 
% for every maximum stable set $S$ of $D'$ and for every pair of distinct vertices $u, v \in S$, it follows that $\lambda(D' - u) \geq \Bigl\lceil\frac{|V(D')|}{\alpha(D')}\Bigr\rceil$ or $\lambda(D' - v) \geq \Bigl\lceil\frac{|V(D')|}{\alpha(D')}\Bigl\rceil$.
% Is it true that $D$ is $\alpha$-diperfect?
% \end{question}

\bibliographystyle{abbrv}

\end{document}